\numberwithin{equation}{section}						
\def\csname ver@etex.sty\endcsname{3000/12/31}
\let\originalleft\left
\let\originalright\right
\renewcommand{\left}{\mathopen{}\mathclose\bgroup\originalleft}
\renewcommand{\right}{\aftergroup\egroup\originalright}
\renewcommand*{\eqref}[1]{\hyperref[{#1}]{\textup{\tagform@{\ref*{#1}}}}}		
\newcommand{\rmq}{\rmq}
\newcommand{\rG}{\rm{G}}
\newcommand{\cE}{\mathcal{E}}
\renewcommand{\epsilon}{\varepsilon}
\newcommand{\del}{\partial}
\renewcommand{\Re}{\mathop{\mathrm{Re}}}
\newcommand{\vol}{\mathrm{vol}}
\def\cx{\mathbb{C}}
\def\rl{\mathbb{R}}
\def\dist{\mathrm{dist}}
\def\KW{\scriptscriptstyle \mathrm{KW}}
\def\Re{\mathrm{Re}}
\def\Ric{\mathrm{Ric}}
\def\SU{\mathrm{SU}}
\def\Vol{\mathrm{Vol}}
\def\rd{\mathrm{d}}
\def\g{\mathfrak{g}}
\def\<{\mathopen{}\left<}
\def\>{\right>\mathclose{}}
\def\({\mathopen{}\left(}
\def\){\right)\mathclose{}}
\newtheorem{theorem}{Theorem}[section]
\newtheorem{Mtheorem}{Main Theorem}
\newtheorem*{acknowledgment}{Acknowledgment}
\newtheorem{corollary}[theorem]{Corollary}
\newtheorem{definition}[theorem]{Definition}
\newtheorem{lemma}[theorem]{Lemma}
\newtheorem{proposition}[theorem]{Proposition}
\newtheorem{remark}[theorem]{Remark}
\crefname{theorem}{Theorem}{Theorems}						
\crefname{Mtheorem}{Main Theorem}{Main Theorems}			
\crefname{lemma}{Lemma}{Lemmata}							
\crefname{corollary}{Corollary}{Corollaries}				
\crefname{proposition}{Proposition}{Propositions}			
\crefname{ineq}{inequality}{inequalities}					
\crefname{cond}{condition}{conditions}						
\crefname{hypoth}{Hypothesis}{Hypotheses}					
\crefname{def}{Definition}{Definitions}						
\crefname{appsec}{Appendix}{Appendices}
\title{The Kapustin--Witten equations on ALE and ALF gravitational instantons}
\author{\'Akos Nagy}
\address[\'Akos Nagy]{Duke University, Durham, NC, USA}
\urladdr{\href{https://akosnagy.com}{akosnagy.com}}
\email{\href{mailto:contact@akosnagy.com}{contact@akosnagy.com}}
\author{Gon\c{c}alo Oliveira}
\address[Gon\c{c}alo Oliveira]{Universidade Federal Fluminense IME--GMA, Niter\'oi, Brazil}
\urladdr{\href{https://sites.google.com/view/goncalo-oliveira-math-webpage/home}{sites.google.com/view/goncalo-oliveira-math-webpage/home}}
\email{\href{mailto:galato97@gmail.com}{galato97@gmail.com}}
\date{\today}
\keywords{Kapustin--Witten equation, gravitational instantons}
\subjclass[2020]{53C07, 58D27, 58E15, 70S15}
\begin{document}

\begin{abstract}
	We study solutions to the Kapustin--Witten equations on ALE and ALF gravitational instantons. On any such space and for any compact structure group, we prove asymptotic estimates for the Higgs field. We then use it to prove a vanishing theorem in the case when the underlying manifold is $\rl^4$ or $\rl^3 \times \mathbb{S}^1$ and the structure group is $\SU (2)$.
\end{abstract}

\maketitle


\section{Introduction}

\subsection*{Background}

Let us fix a smooth, oriented, Riemannian 4-manifold, $(M, g)$. Let $\Lambda_\cx^* M = \Lambda^* M \otimes \cx$ be the complexified exterior algebra bundle. Let $\rG$ be a compact Lie group and $P \rightarrow M$ a smooth principal $\rG$-bundle over $M$. Let $\rG_\cx$ be the complex form of $\rG$. We then have a principal $\rG_\cx$-bundle $P_\cx = P \times_{\rG} \rG_\cx$, defined via the adjoint action of $\rG$ on $\rG_\cx$. The Hodge star operator $\ast$ can be extended in two inequivalent ways to $\Lambda_\cx^* M \otimes \g_P \simeq \Lambda^* M \otimes \g_{P_\cx}$, either as a complex linear operator or as a conjugate linear operator. In this paper we consider complexified instantons using the conjugate linear extension. We investigated the complex linear extension in \cite{NO20}.

\smallskip

Let us denote the conjugate linear extension of the Hodge star operator by $\overline{\ast}$. The \emph{Kapustin--Witten equations} can be viewed as complexified self-duality equations as follows: Let $(\nabla, \Phi)$ be a pair consisting of a connection on $P_\cx$ and a section of $\Lambda^1 \otimes \g_{P_\cx}$ (the \emph{Higgs field}). Then $\nabla^\cx \coloneqq \nabla + i \Phi$ is a connection on $P_\cx$. In \cite{KW07}, Kapustin and Witten introduced a family of complexified self-duality equations, parametrized by $\theta \in \rl$, as
\begin{equation}
	\overline{\ast} \left( e^{i \theta} F_{\nabla^\cx} \right) = e^{i \theta} F_{\nabla^\cx}.  \label{eq:thetaKWeq}
\end{equation}
We recommend \cite{GU12} for an introduction to the Kapustin--Witten equations.

Note that when $\Phi = 0$ everywhere and $2 \theta \equiv 0 \: (\textnormal{mod} \: \pi)$, \cref{eq:thetaKWeq} reduces to the classical self-duality equation on $P$. As is standard in complex gauge theory, we break the structure group down from $\rG_\cx$ to $\rG$ by adding the Coulomb type equation $\rd_\nabla^* \Phi = 0$. Since $\g_{P_\cx}$ has a canonical real structure, one can separate the real and imaginary parts of \cref{eq:thetaKWeq} and get the following system of equations:
\begin{subequations}
\begin{align}
	\cos (\theta) \left( F_\nabla - \tfrac{1}{2} [\Phi \wedge \Phi] \right)^- - \sin (\theta) \rd_\nabla^- \Phi	&= 0,  \label{eq:thetaKW_1}  \\
	\sin (\theta) \left( F_\nabla - \tfrac{1}{2} [\Phi \wedge \Phi] \right)^+ + \cos (\theta) \rd_\nabla^+ \Phi	&= 0,  \label{eq:thetaKW_2}  \\
	\rd_\nabla^* \Phi 																							&= 0.  \label{eq:thetaKW_3}
\end{align}
\end{subequations}
The \cref{eq:thetaKW_1,eq:thetaKW_2,eq:thetaKW_3} are called the {\em $\theta$-Kapustin--Witten equations}. When $\theta = \tfrac{\pi}{4}$, \cref{eq:thetaKW_1,eq:thetaKW_2} can be rewritten as the following single equation:
\begin{equation}
	F_\nabla = \ast \rd_\nabla \Phi + \tfrac{1}{2} [\Phi \wedge \Phi].  \label{eq:theKW}
\end{equation}
There is a Yang--Mills--Higgs type energy functional corresponding to the Kapustin--Witten \cref{eq:theKW}. This functional, which we call the {\em Kapustin--Witten energy} is
\begin{equation}
	\cE_{\KW} (\nabla, \Phi) = \int\limits_M \left( |F_\nabla|^2 + |\nabla \Phi|^2 + \tfrac{1}{4} |[\Phi \wedge \Phi]|^2 \right) \vol.  \label{eq:KWE}
\end{equation}
Similarly to instantons, solutions to the Kapustin--Witten \cref{eq:theKW} with finite Kapustin--Witten energy are, at least formally, absolute minimizers of \eqref{eq:KWE}. When $M$ is closed and $2 \theta \not\equiv 0 \: (\textnormal{mod} \: \pi)$, then all solutions to the $\theta$-Kapustin--Witten \cref{eq:thetaKW_1,eq:thetaKW_2,eq:thetaKW_3} satisfy that $\nabla$ is flat, $\Phi$ is $\nabla$-parallel, and $[\Phi \wedge \Phi]$ vanishes identically; cf. \cite{GU12}*{Corollary 3.3}.

Witten conjectures that the moduli spaces of \cref{eq:theKW} have applications to low dimensional topology; cf. \cites{W12,W18}. Related work has been done recently by, for example, Taubes \cites{T18,T19}, Mazzeo and Witten \cites{MW14,MW17}, He and Mazzeo \cites{HM17}, and He and Walpuski \cite{HW19}.

\smallskip

In this paper, we consider finite energy solutions to the $\theta$-Kapustin--Witten \cref{eq:thetaKW_1,eq:thetaKW_2,eq:thetaKW_3} on certain noncompact, complete, Ricci-flat, Riemannian 4-manifolds, called ALE and ALF gravitational instantons.

Let us, briefly, introduce these classes spaces: Let $(M, g)$ be a (noncompact), smooth, and oriented Riemannian 4-manifold with Levi-Civita connection denoted by $\nabla^{\mathrm{LC}}$. For all $R > 0$ and $x_0 \in M$, let $B_R (x_0) \subset M$ be the (closed) geodesic ball of radius $R$ around $x_0$, and let $S_R (x_0) \coloneqq \partial B_R (x_0)$.

\begin{definition}[ALE and ALF gravitational instantons]
	\label{definition:ALX}
	Let $(M, g)$ be as above and fix $x_0 \in M$. Let $Y$ be a compact 3-manifold, which is a $\mathbb{T}^k$-fibration over a closed base $B$ with projection $\pi_Y$, where $k = 0$, or $1$, together with a connection on $Y$ when $k = 1$, and a metric $g_B$ on $B$. Assume that the end of $X$ is modeled on $Y \times \rl_+$, that is, there exists $R_0 > 0$, such that $B_{R_0} (x_0)$ is a smooth, compact manifold with boundary and $M - B_{R_0} (x_0)$ is diffeomorphic to $Y \times (R_0, \infty)$. Moreover, there exists a diffeomorphism $\phi : M - B_{R_0} (x_0) \rightarrow Y \times (R_0, \infty)$, such that for $j = 0, 1$, and $2$, we have
	\begin{equation}
		\lim\limits_{R \rightarrow \infty} R^j \left\| \left( \nabla^{\mathrm{LC}} \right)^j \left( g - \phi^* \left( dR^2 + g_{\mathbb{T}^k} + R^2 \pi_Y^* (g_B) \right) \right) \right\|_{L^\infty (S_R)} = 0.
	\end{equation}
	We call $(M, g)$ {\em Asymptotically Locally Euclidean (ALE)}, if $k = 0$, and {\em Asymptotically Locally Flat (ALF)}, if $k = 1$. An ALE or ALF 4-manifold is called a {\em gravitational instanton}, if it is Ricci-flat.
\end{definition}

\begin{remark}	
	Note that we do not require $(M, g)$ to be hyperk\"ahler, or even complex. For example, the Euclidean--Schwarzschild manifold can be considered.

	The prototypical example of an ALE gravitational instanton is $\rl^4$ with its canonical flat metric. Other examples are given by the construction of Kronheimer \cite{Kronheimer1989}.

	The prototypical example of an ALF gravitational instanton is $\rl^3 \times \mathbb{S}^1$ with its canonical flat metric. Other important examples include the Euclidean--Schwarzschild, the multi-Taub--NUT, and the Atyah--Hitchin manifolds. Many more (hyperk\"ahler) examples are given via the Gibbons--Hawking construction \cite{GH78}.
\end{remark}

\smallskip

\subsection*{Main results}

Our first main theorem is an asymptotic bound on the Higgs field, $\Phi$, when the underlying manifold, $(M, g)$ is an ALE or ALF gravitational instanton. The proof uses ideas of \cite{JT80}*{Theorem~10.3} adapted to the 4-dimensional setting and to curved geometries.

\begin{Mtheorem}
	\label{Mtheorem:KW_on_ALX}
	Let $(\nabla, \Phi)$ be a finite energy solution to the $\theta$-Kapustin--Witten \cref{eq:thetaKW_1,eq:thetaKW_2,eq:thetaKW_3}, with $2 \theta \not\equiv 0 \: (\textnormal{mod} \: \pi)$, on an ALE or ALF gravitational instanton $(M, g)$. Then there is a constant $c \geqslant 0$, such that
	\begin{equation}
		\lim\limits_{R \rightarrow \infty} \inf\limits_{S_R} |\Phi| = \lim\limits_{R \rightarrow \infty} \sup\limits_{S_R} |\Phi| = \lim\limits_{R \rightarrow \infty} \sup\limits_{M_R} |\Phi| = c.  \label{eq:Phi_limits}
	\end{equation}
	Furthermore, if $c = 0$, then $\Phi = 0$ everywhere.
\end{Mtheorem}

\smallskip

Combining \Cref{Mtheorem:KW_on_ALX} with \cite{T17}*{Theorem~1.1}, we prove the following result.

\begin{corollary}
	\label{Mtheorem:Vanishing_KW}
	Let $(\nabla, \Phi)$ be a finite energy solutions to the $\theta$-Kapustin--Witten \cref{eq:thetaKW_1,eq:thetaKW_2,eq:thetaKW_3}, with $2 \theta \not\equiv 0 \: (\textnormal{mod} \: \pi)$, on $M = \rl^4$ or $\rl^3 \times \mathbb{S}^1$ with its flat metric, and let $\rG = \SU (2)$.

	Then $\nabla$ is flat, then $\Phi$ is $\nabla$-parallel, and $[\Phi \wedge \Phi] = 0$.
\end{corollary}

\noindent We conjecture that \Cref{Mtheorem:Vanishing_KW} holds on an arbitrary ALE or ALF gravitational instanton.

\smallskip

\subsection*{Organization of the paper}  In \Cref{sec:2ndorderKW}, we compute second order equations that are satisfied by solutions to the $\theta$-Kapustin--Witten \cref{eq:thetaKW_1,eq:thetaKW_2,eq:thetaKW_3}, with $2 \theta \not\equiv 0 \: (\textnormal{mod} \: \pi)$. While these equations are known in the literature, we include their proof for clarity and completeness. These are used in the proofs of \Cref{Mtheorem:KW_on_ALX}. In \Cref{sec:e_KW},  we study the analytic properties of the Kapustin--Witten energy density. In \Cref{sec:geometry}, we recall a few useful properties of ALE and ALF gravitational instantons. Finally, in \Cref{sec:proofs} we present the proofs of \Cref{Mtheorem:KW_on_ALX,Mtheorem:Vanishing_KW}.

\smallskip

\begin{acknowledgment}
	The authors are grateful to Mark Stern for many helpful conversations about gauge theory. We also thank Bera Gorapada for his valuable comments on the results of this paper.

	\'Akos Nagy would like to thank Siqi He for useful discussions about the Kapustin--Witten equations, and he also thanks the Universidade Federal Fluminense and IMPA for their hospitality during the final stages of the preparation of this paper.

	Finally, we thank 

	We also thank the anonymous referees for their comments and suggestions.

	Gon\c{c}alo Oliveira is supported by Funda\c{c}\~ao Serrapilheira 1812-27395, by CNPq grants 428959/2018-0 and 307475/2018-2, and FAPERJ through the program Jovem Cientista do Nosso Estado E-26/202.793/2019.
\end{acknowledgment}

\smallskip

\section{The second order Kapustin--Witten equations}
\label{sec:2ndorderKW} 

For the next lemma, let $(x_1, x_2, x_3, x_4)$ be a local, normal chart on $M$ at an arbitrary point, and let
\begin{equation}
	j_\Phi \coloneqq \sum_{i = 1}^4 [\nabla \Phi_i, \Phi_i] \in \Gamma \left( \Lambda^1 \otimes \g_P \right).
\end{equation}
be the {\em supercurrent} generated by $\Phi$.

\begin{lemma}\label{lem:Second_Order_Eqs}
	Let $(M, g)$ be any Riemannian 4-manifold, $P \to M$ a principal $\rG$-bundle, and regard the Ricci tensor of $(M, g)$, $\Ric_g$, as an endomorphism of $\Lambda^1 \otimes \g_P$. If $(\nabla, \Phi)$ is a solution to the $\theta$-Kapustin--Witten \cref{eq:thetaKW_1,eq:thetaKW_2,eq:thetaKW_3}, with $2 \theta \not\equiv 0 \: (\textnormal{mod} \: \pi)$, on $P \to M$, it also satisfies the following system of second order equation:
	\begin{subequations}
	\begin{align}
		\nabla^* \nabla \Phi	&= - \tfrac{1}{2} \ast \left[ \left( \ast [\Phi \wedge \Phi] \right) \wedge \Phi \right] - \Ric_g (\Phi),  \label{eq:Laplace_Phi}  \\		
		\rd_\nabla^* F_\nabla	&= j_\Phi.  \label{eq:d_star_F}
	\end{align}
	\end{subequations}
\end{lemma}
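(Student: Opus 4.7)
My plan is to derive both second-order equations by combining the first-order $\theta$-Kapustin--Witten system with the Bianchi identity $d_\nabla F_\nabla = 0$, the Weitzenb\"ock formula for $\ad(P)$-valued $1$-forms, and the graded Leibniz rule $d_\nabla [\Phi \wedge \Phi] = 2 [d_\nabla \Phi \wedge \Phi]$.  I would derive \cref{eq:d_star_F} first and then use it in the derivation of \cref{eq:Laplace_Phi}.

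For \cref{eq:d_star_F}, the key observation is that on $2$-forms in four dimensions $d_\nabla \ast = \ast d_\nabla^*$, so the Bianchi identity combined with $\ast F_\nabla^\pm = \pm F_\nabla^\pm$ yields
\[
d_\nabla F_\nabla^\pm = \pm \tfrac{1}{2} \ast d_\nabla^* F_\nabla.
\]
It therefore suffices to compute $d_\nabla F_\nabla^\pm$.  Applying $d_\nabla$ to \cref{eq:thetaKW_1,eq:thetaKW_2}, the terms $d_\nabla d_\nabla^\pm \Phi$ reduce via $d_\nabla^2 \Phi = [F_\nabla \wedge \Phi]$ and $d_\nabla \ast d_\nabla \Phi = \ast d_\nabla^* d_\nabla \Phi$, while $d_\nabla [\Phi \wedge \Phi]^\pm$ reduces to $[d_\nabla \Phi \wedge \Phi]$ plus Hodge-star corrections.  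Taking the $\sin\theta$-weighted first equation together with the $\cos\theta$-weighted second equation makes all trigonometric factors cancel; re-expressing the remaining $d_\nabla \Phi$ via \cref{eq:thetaKW_1,eq:thetaKW_2} and using the Jacobi identity in $\g_P$ to eliminate the cubic-in-$\Phi$ contributions then identifies the right-hand side as $j_\Phi$ through Hodge duality.

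For \cref{eq:Laplace_Phi}, I apply the Weitzenb\"ock formula for $\Omega^1(M) \otimes \g_P$,
\[
(d_\nabla^* d_\nabla + d_\nabla d_\nabla^*) \Phi = \nabla^* \nabla \Phi + \Ric_g(\Phi) + \mathcal{F}_\nabla(\Phi),
\]
where $\mathcal{F}_\nabla(\Phi)_k = \sum_j [F_{jk}, \Phi^j]$ in an orthonormal frame.  By \cref{eq:thetaKW_3} the second term on the left vanishes, and I am left with computing $d_\nabla^* d_\nabla \Phi$.  Using \cref{eq:thetaKW_1,eq:thetaKW_2} to express $d_\nabla^\pm \Phi$ in terms of $F_\nabla^\pm$ and $[\Phi \wedge \Phi]^\pm$ and then applying $d_\nabla^*$, this becomes a combination of $d_\nabla^* F_\nabla$ (equal to $j_\Phi$ by \cref{eq:d_star_F}) and $d_\nabla^*[\Phi \wedge \Phi]^\pm$ (computed via the Leibniz rule under \cref{eq:thetaKW_3}).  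These contributions combine with $\mathcal{F}_\nabla(\Phi)$, itself rewritten via the KW equations, to produce the pure-Higgs term $-\tfrac{1}{2} \ast [(\ast [\Phi \wedge \Phi]) \wedge \Phi]$ on the right-hand side after one further application of the Jacobi identity.

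The main obstacle is the combinatorial bookkeeping of the $\theta$-dependent coefficients:  both second-order equations are $\theta$-independent, so the trigonometric algebra must collapse at each step.  A cleaner organizing principle would be to note that \cref{eq:thetaKW_1,eq:thetaKW_2,eq:thetaKW_3} express $\nabla + i \Phi$ as a complex Yang--Mills connection on $P_\cx$ in a specific real-gauge slice, so that \cref{eq:d_star_F,eq:Laplace_Phi} are the real and imaginary parts of $(d_{\nabla + i\Phi})^* F_{\nabla + i\Phi} = 0$; formulating the derivation this way also handles the degenerate cases $2\theta \equiv 0 \pmod{\pi}$ uniformly, where one of the first two first-order equations degenerates.
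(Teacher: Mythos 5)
Your plan is correct and rests on the same essential ingredients as the paper's proof---the Weitzenb\"ock formula for $\g_P$-valued $1$-forms, the Bianchi identity, the Jacobi identity, and algebraic elimination within the first-order system---differing only in bookkeeping: the paper first solves explicitly for $d_\nabla\Phi$ and $F_\nabla$ in terms of $t=\tan\theta$ and substitutes, establishing \cref{eq:Laplace_Phi} before \cref{eq:d_star_F}, whereas you work with the self-dual and anti-self-dual components and the identity $d_\nabla F_\nabla^{\pm}=\pm\tfrac12 \ast d_\nabla^* F_\nabla$. Your closing remark that both second-order equations are the real and imaginary parts of the complex Yang--Mills equation for $\nabla+i\Phi$ is indeed the cleanest explanation of the $\theta$-independence and of the degenerate cases $2\theta\equiv 0\ (\mathrm{mod}\ \pi)$, though the paper does not take that route.
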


\smallskip

\begin{remark}
	When $(M, g)$ is Ricci-flat, then \cref{eq:Laplace_Phi,eq:d_star_F} are the Euler--Lagrange equations of the Kapustin--Witten energy \eqref{eq:KWE}.
\end{remark}

\smallskip

\begin{remark}
	In \Cref{Mtheorem:KW_on_ALX}, one can replace the condition that $\left( \nabla, \Phi \right)$ is a solution to the $\theta$-Kapustin--Witten \cref{eq:thetaKW_1,eq:thetaKW_2,eq:thetaKW_3} with the assumption that it only solves the second order Kapustin--Witten \cref{eq:Laplace_Phi,eq:d_star_F}, and the conclusions still hold, while in \Cref{Mtheorem:Vanishing_KW}, one can now conclude that $\nabla \Phi = [\Phi \wedge \Phi] = 0$ and $\nabla$ is a Yang--Mills connection.
\end{remark}

\smallskip

\begin{proof}
	Using the Weitzenb\"ock formula and $\rd_\nabla^* \Phi = 0$, we get
		\begin{equation}
			\nabla^* \nabla \Phi = \rd_\nabla^* \rd_\nabla \Phi - \ast \left[ \left( \ast F_\nabla \right) \wedge \Phi \right] - \Ric_g (\Phi).  \label{eq:Phi_Weitzenbock}
	\end{equation}
	Since $2 \theta \not\equiv 0 \: (\textnormal{mod} \: \pi)$, the number $t = \tan (\theta)$ is defined and nonzero. In this case we may rewrite \cref{eq:thetaKW_1,eq:thetaKW_2} as
	\begin{equation}
		\rd_\nabla^\pm \Phi = \mp t^{\pm 1} \left( F_\nabla - \tfrac{1}{2} [\Phi \wedge \Phi] \right)^\pm.  \label{eq:Intermediate_KW}
	\end{equation}
	Then, writing $\rd_\nabla^{\pm}\Phi=\tfrac{1}{2}(\rd_\nabla \Phi \pm \ast \rd_\nabla \Phi)$, and adding these two equations we find 
	\begin{equation}
		\rd_\nabla \Phi = \tfrac{t^{- 1} - t}{2} \left( F_\nabla - \tfrac{1}{2} [\Phi \wedge \Phi] \right) - \tfrac{t^{- 1} + t}{2} \ast \left( F_\nabla - \tfrac{1}{2} [\Phi \wedge \Phi] \right).  \label{eq:d_Phi_2}  
	\end{equation}
	On the other hand, multiplying \cref{eq:Intermediate_KW} by $t^{\mp 1}$ and adding up the resulting equations yields (after dividing by $t+t^{-1}$)
	\begin{equation}
		\rd_\nabla \Phi = \tfrac{t-t^{- 1}}{t+t^{- 1}} \ast \rd_\nabla \Phi - \tfrac{2}{t+t^{- 1}} \ast \left( F_\nabla - \tfrac{1}{2} [\Phi \wedge \Phi] \right),  \label{eq:d_Phi_1} 
	\end{equation}
	which by rearranging can also be read as
	\begin{equation}
		F_\nabla = \tfrac{1}{2} [\Phi \wedge \Phi] + \tfrac{t - t^{- 1}}{2} \rd_\nabla \Phi - \tfrac{t + t^{- 1}}{2} \ast \rd_\nabla \Phi.  \label{eq:F_nabla}
	\end{equation}
	Thus, using \cref{eq:d_Phi_1,eq:F_nabla}, together with the Bianchi identity $\rd_\nabla^* \ast F_\nabla = 0$, we get
	\begin{subequations}
	\begin{align}
		\rd_\nabla^* \rd_\nabla \Phi	&= \tfrac{t - t^{- 1}}{t+t^{- 1}} \rd_\nabla^* \ast \rd_\nabla \Phi - \tfrac{2}{t - t^{- 1}} \rd_\nabla^* \ast \left( F_\nabla - \tfrac{1}{2} [\Phi \wedge \Phi] \right)  \\
										&= - \tfrac{t - t^{- 1}}{t+t^{- 1}} \ast [F_\nabla \wedge \Phi] - \tfrac{2}{t + t^{- 1} } \ast [\rd_\nabla \Phi \wedge \Phi]  \\
										&= \ast \left[ \ast \left( F_\nabla - \tfrac{1}{2} [\Phi \wedge \Phi] \right) \wedge \Phi \right],
	\end{align}
	\end{subequations}
	where in the last equality we replaced $F_\nabla$ using \cref{eq:F_nabla} and the Jacobi identity $[[\Phi \wedge \Phi] \wedge \Phi] = 0$. Combining the above equation with \cref{eq:Phi_Weitzenbock} concludes the proof of \cref{eq:Laplace_Phi}.

	Now we prove \cref{eq:d_star_F}:
	\begin{align}
		\rd_\nabla^* F_\nabla	&= \tfrac{1}{2} \rd_\nabla^* [\Phi \wedge \Phi] + \tfrac{t - t^{- 1}}{2} \rd_\nabla^* \rd_\nabla \Phi + \tfrac{t+t^{- 1}}{2} \ast [F_\nabla \wedge \Phi]  \\
								&= j_\Phi + \ast [(\ast \rd_\nabla \Phi) \wedge \Phi] + \tfrac{t-t^{- 1}}{2} \ast \left[ \ast \left( F_\nabla - \tfrac{1}{2} [\Phi \wedge \Phi] \right) \wedge \Phi \right] + \tfrac{t+t^{- 1}}{2} \ast [F_\nabla \wedge \Phi]  \\
								&= j_\Phi + \ast [(\ast \rd_\nabla \Phi) \wedge \Phi] + \tfrac{t-t^{- 1}}{2} \ast \left[ \left( \tfrac{t - t^{- 1}}{2}\ast  \rd_\nabla \Phi - \tfrac{t+t^{- 1}}{2} \rd_\nabla \Phi \right) \wedge \Phi \right]  \\
								&\quad+ \tfrac{t+t^{- 1}}{2} \ast \left[ \left( \tfrac{1}{2} [\Phi \wedge \Phi] + \tfrac{t - t^{- 1}}{2} \rd_\nabla \Phi - \tfrac{t+t^{- 1}}{2} \ast \rd_\nabla \Phi \right) \wedge \Phi \right]  \\
								&= j_\Phi + \ast [(\ast \rd_\nabla \Phi) \wedge \Phi] + \tfrac{t-t^{- 1}}{2} \ast \left[ \left( \tfrac{t - t^{- 1}}{2} \ast \rd_\nabla \Phi - \tfrac{t+t^{- 1}}{2} \rd_\nabla \Phi \right) \wedge \Phi \right]  \\
								&\quad+ \tfrac{t+t^{- 1}}{2} \ast \left[ \left( \tfrac{t - t^{- 1}}{2} \rd_\nabla \Phi - \tfrac{t+t^{- 1}}{2} \ast \rd_\nabla \Phi \right) \wedge \Phi \right]  \\
								&= j_\Phi,
	\end{align}
	which completes the proof.
\end{proof}

\smallskip

\section{The Kapustin--Witten energy density}
\label{sec:e_KW}

The Kapustin--Witten energy \eqref{eq:KWE} is the integral of
\begin{equation}
	e_{\KW} = |F_\nabla|^2 + |\nabla \Phi|^2 + \tfrac{1}{4} |[\Phi \wedge \Phi]|^2 \geqslant 0.  \label{eq:KWED}
\end{equation}
We call $e_{\KW}$ the {\em Kapustin--Witten energy density}. First we prove a decay result for $e_{\KW}$.

\begin{proposition}\label{proposition:e_KW_decay}
	Then there is a positive number $C = C (M, g, \rG)$, such that, if $(\nabla, \Phi)$ is a smooth solution to second order \cref{eq:Laplace_Phi,eq:d_star_F} with finite energy, then $e_{\KW}$ decays uniformly to zero at infinity.
\end{proposition}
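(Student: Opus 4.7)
The plan is to reduce the pointwise decay of $e_{\KW}$ to an $\varepsilon$-regularity mean-value estimate of the form
\[
	\int_{B_1(x)} e_{\KW}\, \vol \leqslant \varepsilon_0 \quad\Longrightarrow\quad e_{\KW}(x) \leqslant C \int_{B_1(x)} e_{\KW}\, \vol ,
\]
valid for all $x$ in the ALE/ALF end, with $\varepsilon_0, C$ depending only on $(M,g,\rG)$.  Granting this, finiteness of the Kapustin--Witten energy forces $\int_{B_1(x)} e_{\KW}\, \vol \to 0$ uniformly as $\dist(x_0, x)\to \infty$, which gives the conclusion.  The key geometric input is that ALE/ALF gravitational instantons have bounded sectional curvature and positive injectivity radius at infinity, so that on unit balls in the end one has uniform volume comparison and a uniform Sobolev inequality of the shape needed for Moser iteration.

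The first analytic step is to derive the Bochner-type inequality underlying the $\varepsilon$-regularity.  Applying the Weitzenb\"ock formula to $F_\nabla$, $\nabla\Phi$, and $[\Phi \wedge \Phi]$ and feeding in the second-order equations \cref{eq:Laplace_Phi,eq:d_star_F} together with Ricci-flatness of $(M,g)$, one obtains, schematically,
\[
	\nabla^*\nabla\, e_{\KW} \;\leqslant\; c_1\,\bigl(1 + |\mathrm{Rm}_g| + |\Phi|^2\bigr)\, e_{\KW} \;+\; c_2\, e_{\KW}^{3/2},
\]
with $c_1, c_2$ depending only on $\rG$.  To handle the coefficient $|\Phi|^2$, which is not a priori bounded, I would establish a companion Bochner identity for $|\Phi|^2$: pairing \cref{eq:Laplace_Phi} with $\Phi$ and computing $\langle \ast [(\ast[\Phi\wedge\Phi])\wedge\Phi], \Phi\rangle$ in a local normal frame identifies it with $\pm|[\Phi\wedge\Phi]|^2$, and on a Ricci-flat manifold this gives $\nabla^*\nabla |\Phi|^2 \leqslant 0$, i.e.\ $|\Phi|^2$ is subharmonic in the geometer's convention.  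Combined with the finiteness of $\int_M |\nabla \Phi|^2 \vol$ and a Jaffe--Taubes style cutoff argument on the ALE/ALF end (compare \cite{JT80}*{Theorem~10.3}), this should yield a uniform a priori bound $\sup_{M\setminus B_{R_0}(x_0)} |\Phi| \leqslant C_0$, turning the coefficient $1+|\Phi|^2$ into a harmless constant.

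Once $|\Phi|$ is uniformly bounded on the end, the differential inequality for $e_{\KW}$ takes the form $\nabla^*\nabla\, e_{\KW} \leqslant A\, e_{\KW} + B\, e_{\KW}^{3/2}$ with constants $A,B = A(M,g,\rG), B(M,g,\rG)$, and standard Moser iteration on unit balls in the end yields the $\varepsilon$-regularity stated above.  The main obstacle I anticipate is the sign check and the a priori $L^\infty$ bound for $|\Phi|$; if the cubic Higgs term in $\nabla^*\nabla|\Phi|^2$ has the unfavorable sign, then $|\Phi|^2$ is only subharmonic up to a nonnegative term $|[\Phi\wedge\Phi]|^2$, and the bound on $|\Phi|$ must be extracted by coupling the $|\Phi|^2$ inequality to the $e_{\KW}$ inequality, performing Moser iteration on the pair $(|\Phi|^2, e_{\KW})$ simultaneously.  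This coupling is technically delicate but, given the finiteness of the total energy and the bounded geometry at infinity, should still close, completing the proof of the proposition.
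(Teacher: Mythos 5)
Your overall strategy---a mean-value/$\varepsilon$-regularity estimate on unit balls in the end, combined with the observation that finite total energy forces the local energy to decay uniformly---is exactly the paper's strategy; the paper packages the Moser iteration by citing Uhlenbeck's mean value inequality, applied to $f=\sqrt{e_{\KW}}$ on balls of radius comparable to the injectivity radius. The genuine gap is in how you handle the coefficient $|\Phi|^2$ in your Bochner inequality. You propose to first establish $\sup_{M\setminus B_{R_0}(x_0)}|\Phi|\leqslant C_0$ from subharmonicity of $|\Phi|^2$ together with finiteness of $\int_M|\nabla\Phi|^2\vol$. This does not close: the mean value inequality for the subharmonic function $|\Phi|^2$ requires uniform local $L^1$ control of $|\Phi|^2$, which is not among the hypotheses, and a cutoff argument on $\nabla(|\Phi|^2)$ needs $|\Phi|\,|\nabla\Phi|\in L^2$, which already presupposes a bound on $|\Phi|$. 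Worse, such a sup bound is essentially the content of \Cref{Mtheorem:KW_on_ALX}, whose proof in the paper \emph{uses} this proposition (through \Cref{cor:Lpbounds} and the Green's function estimate), so deriving the proposition from it would be circular. Your fallback of a coupled Moser iteration on the pair $(|\Phi|^2,e_{\KW})$ is not worked out and is not obviously salvageable for the same reason: there is no a priori local integrability of $|\Phi|^2$ to start the iteration.

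The resolution, and what the paper actually does, is that the coefficient $|\Phi|^2$ never appears. In the Weitzenb\"ock computation for $\nabla\Phi$ fed with \cref{eq:Laplace_Phi,eq:d_star_F}, every term in which $\Phi$ occurs undifferentiated and outside a bracket pairs with $\nabla\Phi$, via ad-invariance of the inner product, into a complete square of the form $|[\nabla\Phi\wedge\Phi]|^2$ with a favorable sign; the surviving terms are cubic in the triple $(F_\nabla,\nabla\Phi,[\Phi\wedge\Phi])$ and hence bounded by $e_{\KW}^{3/2}$. The analogous cancellations occur for $|F_\nabla|^2$ and $|[\Phi\wedge\Phi]|^2$, so with $f=\sqrt{e_{\KW}}$ and Kato's inequality one arrives at $\Delta f\leqslant C(f^2+f)$ with $C=C(M,g,\rG)$ only. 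The quadratic nonlinearity is then absorbed as a potential $b=C(f+1)$, which lies in $L^q$ on balls of fixed radius uniformly in the basepoint because the total energy is finite, and Uhlenbeck's mean value theorem yields $e_{\KW}(x)\lesssim\|e_{\KW}\|_{L^1(B_{r_0}(x),g)}$ directly, with no a priori control of $|\Phi|$. I recommend redoing the sign check you flagged: it resolves in your favor and makes the detour through $\sup|\Phi|$ unnecessary.
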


\begin{proof}
	Let $r_0 = \min \left( \{ \mathrm{inj} (M, g)/2, 1 \} \right)$. Note that $r_0 > 0$, since $(M, g)$ is either ALE or ALF. Let $f = \sqrt{e_{\KW}} \in L_1^2 (B_{2r_0} (x))$. By the Weitzenb\"ock formula and the Ricci-flatness of $(M, g)$, we have (in normal coordinates)
	\begin{equation}
		\nabla^* \nabla (\nabla \Phi) = \nabla (\nabla^* \nabla \Phi) + \sum\limits_{i,j = 1}^4 \left( 2 [F_{ik}, \nabla_k \Phi_j] + [(\rd_\nabla^* F_\nabla)_i, \Phi_j] \right) \ \rd x^i \otimes \rd x^j.
	\end{equation}
	Now further using \cref{eq:Laplace_Phi,eq:d_star_F}, we get that $\Delta |\nabla \Phi|^2 \leqslant C (f^3 + f) - |\nabla^2 \Phi|$. Similar computations for $|F_\nabla|^2$ and $|[\Phi \wedge \Phi]|^2$, and Kato's inequality yields that $f$ weakly satisfies (after maybe redefining $C$) the following inequality:
	\begin{equation}
		\Delta f \leqslant C (f^2 + f)
	\end{equation}
	Hence, by \cite{Uhlenbeck1982}*{Theorem~3.2}, using the notations of the reference
	\begin{equation}
		f = \sqrt{e_{\KW}}, \: b = f + 1, \: q = 2 + \| e_{\KW} \|_{L^\infty (B_{2 \dist (x, x_0)} (x))}^{- 1}, \: n = 4, \: \gamma = 1, \: a_0 = r_0, \mbox{ and } a = \tfrac{1}{2} a_0,
	\end{equation}
	we get that, for some other positive number $C^\prime = C^\prime (M, g)$:
	\begin{equation}
		e_{\KW} (x) \leqslant C^\prime \| e_{\KW} \|_{L^1 (B_{r_0} (x), g)}.  \label[ineq]{ineq:e_KW_bound}
	\end{equation}
	By the finiteness of the energy, we get that the integral of $e_{\KW}$ on $B_{r_0} (x)$ decays uniformly to zero as $\dist (x, x_0) \rightarrow \infty$, and thus so does $e_{\KW} (x)$, which concludes the proof.
\end{proof}

\smallskip

\begin{corollary}
	\label{cor:Lpbounds}
	There is a positive number $C = C (M, g, \rG)$, such that
	\begin{equation}
		\forall p \in [1, \infty) \cup \{ \infty \} : \| e_{\KW} \|_{L^p (M, g)} \leqslant C \| e_{\KW} \|_{L^1 (M, g)}.
	\end{equation}
\end{corollary}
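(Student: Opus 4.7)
The plan is to observe that this is essentially a two-line consequence of the pointwise estimate \cref{ineq:e_KW_bound} established in the proof of \Cref{proposition:e_KW_decay}, followed by $L^p$-interpolation between $L^1$ and $L^\infty$. There is no serious obstacle; the only thing to check is that the constants can be chosen uniformly in $p$.

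First I would handle the endpoint $p = \infty$. For any $x \in M$, the pointwise bound \cref{ineq:e_KW_bound} gives
\begin{equation}
	e_{\KW} (x) \leqslant C^\prime \| e_{\KW} \|_{L^1 (B_{r_0} (x), g)} \leqslant C^\prime \| e_{\KW} \|_{L^1 (M, g)},
\end{equation}
so taking the supremum over $x \in M$ yields $\| e_{\KW} \|_{L^\infty (M, g)} \leqslant C^\prime \| e_{\KW} \|_{L^1 (M, g)}$. The case $p = 1$ is trivial with constant $1$.

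Next, for $p \in (1, \infty)$, I would interpolate: since $e_{\KW} \geqslant 0$,
\begin{equation}
	\| e_{\KW} \|_{L^p (M, g)}^p = \int\limits_M e_{\KW}^p \, \vol \leqslant \| e_{\KW} \|_{L^\infty (M, g)}^{p - 1} \, \| e_{\KW} \|_{L^1 (M, g)} \leqslant (C^\prime)^{p - 1} \| e_{\KW} \|_{L^1 (M, g)}^p,
\end{equation}
and taking $p$-th roots gives $\| e_{\KW} \|_{L^p (M, g)} \leqslant (C^\prime)^{(p - 1)/p} \| e_{\KW} \|_{L^1 (M, g)}$. Since $(C^\prime)^{(p - 1)/p} \leqslant \max (1, C^\prime)$ uniformly for all $p \in [1, \infty) \cup \{ \infty \}$, the conclusion follows with $C = \max (1, C^\prime)$, which depends only on $(M, g, \rG)$.
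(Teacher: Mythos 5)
Your proposal is correct and follows essentially the same route as the paper: both deduce the $L^\infty$ bound $\| e_{\KW} \|_{L^\infty (M)} \leqslant C^\prime \| e_{\KW} \|_{L^1 (M, g)}$ from \cref{ineq:e_KW_bound} and then interpolate between $L^1$ and $L^\infty$ (the paper phrases the interpolation step as H\"older's inequality), arriving at the same uniform constant $\max \left( \{ C^\prime, 1 \} \right)$.
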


\begin{proof}
	By \cref{ineq:e_KW_bound}, we get that $e_{\KW} \in L^\infty (M)$ and, in fact, $\| e_{\KW} \|_{L^\infty (M)} \leqslant C^\prime \| e_{\KW} \|_{L^1 (M, g)}$.  Hence, for any $p \geqslant 1$, using H\"older's inequality, we get
	\begin{equation}
		\| e_{\KW} \|_{L^p (M, g)} \leqslant \| e_{\KW} \|_{L^\infty (M)}^{(p - 1)/p} \| e_{\KW} \|_{L^1 (M, g)}^{1/p} \leqslant (C^\prime)^{(p - 1)/p} \| e_{\KW} \|_{L^1 (M, g)} \leqslant \max \left( \{ C^\prime, 1 \} \right) \:  \| e_{\KW} \|_{L^1 (M, g)},
	\end{equation}
	thus $\| e_{\KW} \|_{L^p (M, g)}$ is finite, and can be bounded by a constant independent of $p$.
\end{proof}

\smallskip

\section{On the geometry of ALE and ALF gravitational instantons}
\label{sec:geometry}

In this section we recall a few geometric properties of ALE and ALF gravitational instantons that will be used in the proof of \Cref{Mtheorem:KW_on_ALX}.

\smallskip

For quantities $A, B$ the relation $A \lesssim B$ is equivalent to $A = O (B)$, while $A \sim B$ is equivalent to $A \lesssim B$ and $B \lesssim A$. Furthermore, let $k = 0$, if $(M, g)$ is ALE, and $k = 1$, if $(M, g)$ is ALF.

\smallskip

Let $x \in M$, $\rho$ be the radial coordinate on $T_x M$ and define $m \in C^\infty (T_x M; \rl_+)$ via
\begin{equation}
	m \coloneqq \tfrac{\exp_x^* \left( \vol_g \right)}{\rd \rho \wedge \vol_{\mathbb{S}^3}}.
\end{equation}
From the Laplacian Comparison Theorem---see, for example \cite{Walpuski}*{Proposition~20.7}---we have 
\begin{equation}\label{eq:Local_BG}
	\del_\rho ( \rho^{-3} m ) \leqslant 0,  \label[ineq]{ineq:LCT}
\end{equation}
away from the cut locus. Let us now recall the local and global versions of the Gromov--Bishop Theorem, applied to the case of ALE and ALF gravitational instantons.

\begin{theorem}[The Local Bishop--Gromov Theorem for $(M, g)$]
	\label{theorem:Local_Bishop--Gromov}
	As $\rho \to 0$, the quantity $\rho^{-3} m$ converges to a constant, and as $\rho \to \infty$, we have $m \lesssim \rho^{3-k}$.
\end{theorem}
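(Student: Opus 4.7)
The plan is to handle the two behaviors separately.

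First, the $\rho \to 0$ convergence is a standard consequence of the expansion of the metric in geodesic normal coordinates at $x$: there one has $g_{ij} = \delta_{ij} + O(|v|^2)$, hence $\sqrt{\det g} = 1 + O(|v|^2)$. Comparing with the Euclidean volume form $\rho^3 \, d\rho \wedge \vol_{\mathbb{S}^3}$ on $T_x M$ yields $m = \rho^3(1 + O(\rho^2))$, so $\rho^{-3} m \to 1$ as $\rho \to 0$, giving the first part of the statement (with limiting constant $1$).

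For large $\rho$ in the ALE case ($k = 0$), I would integrate the monotonicity \eqref{ineq:LCT} from this initial value. Since $\rho^{-3} m$ is non-increasing in $\rho$ and tends to $1$ as $\rho \to 0^+$, we get $\rho^{-3} m \leqslant 1$ away from the cut locus, i.e., $m \leqslant \rho^3 = \rho^{3-k}$.

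The ALF case ($k = 1$) is more delicate, because Bishop--Gromov alone only gives $m \leqslant \rho^3$, whereas one needs $m \lesssim \rho^2$. To capture the slower volume growth of the ALF end, my plan is to compare directly with the asymptotic model metric $g_0 = dR^2 + g_{\mathbb{T}^1} + R^2 \pi_Y^* g_B$, whose volume density grows only as $R^2$ because the $\mathbb{T}^1$-fiber does not expand. For $\rho$ large, a geodesic from $x$ enters the end $Y \times (R_0, \infty)$ and tracks, up to bounded transverse deviation, a geodesic of $g_0$, along which $R \sim \rho$. I would then bound the Jacobi fields of $\exp_x$ by those of the model exponential map, which grow linearly in the two $B$-directions but remain $O(1)$ in the $\mathbb{T}^1$-direction; their product gives an overall Jacobian $\lesssim R^2 \sim \rho^2$.

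The main obstacle is making this Jacobi-field comparison rigorous over the full geodesic length $\rho$. The asymptotic decay $R^j \|(\nabla^{\mathrm{LC}})^j (g - g_0)\|_{L^\infty(S_R)} \to 0$ from \Cref{definition:ALX} turns the difference between the geodesic and Jacobi equations for $g$ and $g_0$ into an integrable ODE perturbation along radial rays, so a standard ODE comparison should yield only a bounded multiplicative error on the Jacobian. The step requiring the most care is excluding geodesics that spend long times in nearly $\mathbb{T}^1$-directions and pick up uncontrolled transverse spread; this should follow from energy conservation along geodesics together with the boundedness of the $\mathbb{T}^1$-fiber length, which caps the momentum carried in the collapsing direction.
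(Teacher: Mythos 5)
Your treatment of the $\rho \to 0$ limit and of the ALE case is correct and is the standard argument (the paper itself offers no proof here, recalling the statement as a known consequence of \cref{ineq:LCT}). The problem is the ALF case, where what you propose is not merely incomplete but aims at a pointwise bound that is false. Take the prototype $M = \rl^3 \times \mathbb{S}^1$ itself: the exponential map at any $x$ factors through the flat universal cover $\rl^4$, so $\exp_x^*(\vol_g) = \rho^3 \, d\rho \wedge \vol_{\mathbb{S}^3}$ identically on $T_x M$, i.e.\ $m = \rho^3$ in every direction and for all $\rho$ --- in particular along directions tangent to the $\rl^3$ factor, which never meet the cut locus. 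This also shows where the Jacobi-field heuristic breaks down: Jacobi fields solve an ODE driven only by the curvature, which vanishes for the flat model, so they grow linearly in all three transverse directions, including the $\mathbb{T}^1$-direction. The sub-Euclidean volume growth of an ALF end is produced entirely by the cut locus (the set of directions whose cut distance exceeds $\rho$ has measure $\sim \rho^{-1}$ in $\mathbb{S}^3$), not by any decay of Jacobi fields, so no comparison of exponential maps can yield $m \lesssim \rho^2$ direction by direction.

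The bound is true, and is what is actually used in the proof of \Cref{lem:KW_Holder}, only in the integrated sense $\int_{\mathbb{S}^3} m(\rho,\theta)\,\vol_{\mathbb{S}^3}(\theta) \lesssim \rho^{3-k}$, with $m$ cut off at the cut locus so that the left-hand side is the area of the geodesic sphere $S_\rho(x)$. In that form it follows from \Cref{theorem:Bishop--Gromov}: the monotonicity of $r^{-4}\Vol(B_r(x))$ gives $\tfrac{d}{dr}\Vol(B_r(x)) \leqslant 4 r^{-1}\Vol(B_r(x))$, the coarea formula identifies the left-hand side with the area of $S_r(x)$, and $\Vol(B_r(x)) \lesssim r^{4-k}$ from the asymptotics in \Cref{definition:ALX}. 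You should either argue this way or reformulate the second half of the statement as a bound on sphere areas; as written, the ALF half of your proposal cannot be completed.
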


\begin{theorem}[The Global Bishop--Gromov Theorem for $(M, g)$]
	\label{theorem:Bishop--Gromov}
	The quotient, $r^{-4}\Vol(B_r(x))$, is a nonincreasing function of $r$ and converges to a constant as $r \to 0$. As $r \to \infty$, by the \Cref{definition:ALX}, we have $\Vol(B_r(x_0)) \sim r^{4 - k}$.
\end{theorem}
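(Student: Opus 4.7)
My plan is to handle separately the three assertions: monotonicity of $r^{-4}\Vol(B_r(x))$, its small-$r$ limit, and the large-$r$ asymptotic $\Vol(B_r(x_0)) \sim r^{4-k}$.

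For the monotonicity and small-$r$ limit I would apply the classical Bishop--Gromov volume comparison, which is an integrated form of the local inequality $\partial_\rho(\rho^{-3} m) \leqslant 0$ from \Cref{theorem:Local_Bishop--Gromov}. Writing
\begin{equation*}
\Vol(B_r(x)) = \int_0^r \int_{\mathbb{S}^3} m(\rho, \omega)\, \chi(\rho, \omega)\, d\omega\, d\rho,
\end{equation*}
where $\chi$ is the characteristic function of the star-shaped injectivity domain in $T_x M$, I would differentiate in $r$ and combine the local Bishop--Gromov inequality with the fact that $\chi$ is nonincreasing in $\rho$ to obtain $\frac{d}{dr} [r^{-4} \Vol(B_r(x))] \leqslant 0$. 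The $r \to 0$ limit then follows from the normal-coordinate expansion $m(\rho, \omega) = \rho^3 + O(\rho^5)$, which holds because $\Ric_g = 0$; integrating yields $r^{-4} \Vol(B_r(x)) \to \tfrac{1}{4}\Vol(\mathbb{S}^3) = \tfrac{\pi^2}{2}$, the Euclidean unit $4$-ball volume.

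For the large-$r$ asymptotic I would use the ALE/ALF structure from \Cref{definition:ALX}. Let $g_0 = dR^2 + g_{\mathbb{T}^k} + R^2 \pi_Y^* g_B$ denote the model metric on $Y \times (R_0, \infty)$. A direct determinant computation gives
\begin{equation*}
d\vol_{g_0} = R^{3-k}\, dR \wedge d\vol_{\mathbb{T}^k} \wedge \pi_Y^* d\vol_B,
\end{equation*}
so integration over $Y \times [R_0, r]$ equals exactly $\frac{1}{4-k}\Vol(\mathbb{T}^k) \Vol(B, g_B) (r^{4-k} - R_0^{4-k})$. The $L^\infty$ decay of $g - \phi^* g_0$ on $S_R$ assumed in \Cref{definition:ALX} implies both $d\vol_g = (1 + o(1))\, d\vol_{g_0}$ pointwise and $d_g(x_0, \phi^{-1}(y, R)) = R + o(R)$ as $R \to \infty$, so under $\phi$ the geodesic ball $B_r(x_0)$ intersected with the end agrees with $Y \times (R_0, r(1 + o(1))]$ up to a piece of bounded $g$-volume. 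Substituting into the model integral gives $\Vol(B_r(x_0)) \sim r^{4-k}$.

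The main obstacle I anticipate is the identification of the $g$-geodesic ball with the coordinate slab under $\phi$: one must verify that the distance function $d_g(x_0, \cdot)$ agrees with the asymptotic radial coordinate $R$ to leading order. This follows from integrating the pointwise closeness of $g$ and $g_0$ along minimizing geodesics using the hypotheses of \Cref{definition:ALX}, but it requires slightly more care in the ALF case $k = 1$, where the compact $\mathbb{S}^1$-fiber directions contribute a bounded correction that needs to be controlled and shown subleading relative to $R$ so as not to affect the $\sim r^{4-k}$ scaling.
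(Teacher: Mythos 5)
Your proof is correct and coincides with what the paper leaves implicit: the theorem is stated there without proof, the monotonicity and small-$r$ limit being the standard Bishop--Gromov comparison obtained by integrating \cref{ineq:LCT} across the cut locus, and the large-$r$ asymptotic following from the model volume form $R^{3-k}\,dR\wedge\vol_{\mathbb{T}^k}\wedge\pi_Y^*\vol_B$ together with the comparability of $\dist(x_0,\cdot)$ with the coordinate $R$, exactly as you describe. The only step you compress is the passage from the monotone area ratio $\rho^{-3}m\chi$ to the monotone volume ratio, which requires the intermediate inequality $\Vol(B_r(x))\geqslant\tfrac{r}{4}\Ar(S_r(x))$; this is standard and your setup delivers it.
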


\smallskip

\Cref{theorem:Bishop--Gromov} and \cite{LY86}*{Theorem~5.2} yield the following Lemma.

\begin{lemma}\label{lem:Green_on_M}
	There is a smooth, positive Green's function, $G$ on $(M, g)$, and
	\begin{equation}
		\forall x \in M: \quad \frac{\dist (x, \cdot)^2}{1 + \dist (x, \cdot)^k} \ G (x, \cdot) \in L^\infty \left( M - \{ x \} \right).
	\end{equation}
\end{lemma}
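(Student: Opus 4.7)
The plan is to invoke \cite{LY86}*{Theorem~5.2}, which on a complete Riemannian manifold with nonnegative Ricci curvature provides both a sharp existence criterion for a positive Green's function of the Laplacian and sharp two-sided bounds on it in terms of the volume growth function $V_x(r) := \Vol(B_r(x))$. Since $(M,g)$ is Ricci-flat, the hypothesis $\Ric_g \geqslant 0$ holds.

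The first step is existence. The Li--Yau criterion for the existence of a positive Green's function on a complete manifold with $\Ric \geqslant 0$ is $\int_1^\infty r\,dr/V_x(r) < \infty$. By \Cref{theorem:Bishop--Gromov}, $V_x(r) \sim r^{4-k}$ as $r \to \infty$, so this integral is comparable to $\int^\infty dr / r^{3-k}$, which converges for both $k = 0$ and $k = 1$. Smoothness of $G$ on the complement of the diagonal and strict positivity follow from elliptic regularity and the maximum principle.

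The second step is the pointwise bound. The Li--Yau upper bound can be written as
\begin{equation}
G(x,y) \lesssim \int_{\dist(x,y)^2}^\infty \frac{dt}{V_x(\sqrt{t})} = 2 \int_{\dist(x,y)}^\infty \frac{s\,ds}{V_x(s)}.
\end{equation}
This is combined with two volume lower bounds: a local one, $V_x(s) \gtrsim s^4$ for $s \leqslant 1$, which follows from the uniformly bounded geometry of ALE/ALF spaces (uniformly bounded sectional curvature together with a uniform positive lower bound on the injectivity radius, both extracted from \Cref{definition:ALX}); and a global one, $V_x(s) \gtrsim s^{4-k}$ for $s \geqslant 1$, which is \Cref{theorem:Bishop--Gromov}. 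Splitting the integral at $s = 1$ then yields $G(x,y) \lesssim \dist(x,y)^{-2}$ when $\dist(x,y) \leqslant 1$ and $G(x,y) \lesssim \dist(x,y)^{-(2-k)}$ when $\dist(x,y) \geqslant 1$. These regimes combine uniformly into $G(x,y) \lesssim (1 + \dist(x,y)^k)/\dist(x,y)^2$, which rearranges to the desired $L^\infty$ statement.

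The main subtlety I foresee is ensuring that the local volume lower bound $V_x(s) \gtrsim s^4$ for small $s$ holds \emph{uniformly in the base point} $x \in M$; this is a bounded-geometry statement that one has to justify separately on the compact core (where it is immediate) and on the end (where it follows from the asymptotic chart in \Cref{definition:ALX}, which makes the metric uniformly comparable to a fixed model metric). Once this uniformity is established, the remainder of the argument is routine calculus on the two integrals above.
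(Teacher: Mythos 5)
Your proposal is correct and follows essentially the same route as the paper, which derives the lemma directly from \cite{LY86}*{Theorem~5.2} together with the volume growth $\Vol(B_r(x)) \sim r^{4-k}$ supplied by \Cref{theorem:Bishop--Gromov}; the paper states this in one line and you have simply filled in the integral estimates. Your computation of the two regimes $\dist(x,y) \leqslant 1$ and $\dist(x,y) \geqslant 1$ and their combination into the stated bound is accurate.
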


\smallskip

Finally, we present (and prove) the appropriate H\"older--Sobolev Embedding.

\begin{lemma}[The H\"older--Sobolev Embedding Theorem on $(M, g)$]\label{lem:KW_Holder}
	For all $p \in (4, \infty)$, there are positive numbers, $C_{\textnormal{HS}}$ and $R_0$, such that for any $f$ with $|\nabla f| \in L^p (M, g)$ and $x, y \in M$ with $\dist(x,y) \geqslant R_0$, we have
	\begin{equation}
		|f (x) - f (y)| \leqslant C_{\textnormal{HS}} \ \dist (x, y)^{1 - \frac{4 - k}{p}} \ \| \nabla f \|_{L^p (M, g)}.
	\end{equation} 
\end{lemma}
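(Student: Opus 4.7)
The plan is to prove the estimate via a Morrey-type telescoping of ball averages, exploiting the large-scale $(4-k)$-dimensional volume growth given by \Cref{theorem:Bishop--Gromov}. First I would fix $R_0$ large enough that $\vol(B_r(z)) \sim r^{4-k}$ holds uniformly for all $z \in M$ and $r \geqslant R_0$; this follows from \Cref{theorem:Bishop--Gromov} by sandwiching balls around $z$ between balls around $x_0$ of comparable radius. Since $(M, g)$ is Ricci-flat, Buser's inequality produces a $(1,1)$-Poincaré inequality on every ball with a universal constant, and combined with H\"older this upgrades to the scale-invariant $(1,p)$-Poincaré bound
\[
\frac{1}{\vol(B_r(z))} \int_{B_r(z)} |f - f_{B_r(z)}| \, \vol \leqslant C \, r \left( \frac{1}{\vol(B_r(z))} \int_{B_r(z)} |\nabla f|^p \, \vol \right)^{1/p}.
\]

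For $x, y \in M$ with $d := \dist(x,y) \geqslant R_0$, I would take a midpoint $m_0$ of a minimizing geodesic from $x$ to $y$ and construct a nested sequence of balls $B_j = B_{r_j}(z_j)$ with $r_j = d \cdot 2^{-j}$ and $z_j$ chosen on the geodesic from $m_0$ to $x$ so that $x \in B_j$ for every $j$, $B_{j+1} \subset B_j$, and $z_j \to x$. The Poincaré inequality applied on each $B_j$ together with volume doubling and the bound $\vol(B_j) \gtrsim r_j^{4-k}$ yields
\[
|f_{B_{j+1}} - f_{B_j}| \leqslant C \, r_j^{1 - (4-k)/p} \, \|\nabla f\|_{L^p(M)}.
\]
Because $p > 4$ makes $1 - (4-k)/p > 0$, the geometric series $\sum_j r_j^{1-(4-k)/p}$ converges and is dominated by its first term $\sim d^{1-(4-k)/p}$. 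Combined with Lebesgue differentiation ($f_{B_j} \to f(x)$) the telescope gives $|f(x) - f_{B_0}| \leqslant C \, d^{1-(4-k)/p} \|\nabla f\|_{L^p(M)}$, and the same bound holds at $y$ by symmetry. The triangle inequality completes the proof.

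The main obstacle is controlling the transition between the small-scale 4-dimensional geometry (where $\vol(B_r) \sim r^{4}$) and the large-scale $(4-k)$-dimensional asymptotics: for indices $j$ with $r_j \leqslant R_0$ the lower bound $\vol(B_j) \gtrsim r_j^{4-k}$ degrades to $\vol(B_j) \gtrsim r_j^{4}$, which produces only the worse Morrey exponent $1 - 4/p$ for those terms. Nevertheless, the tail of the series corresponding to these scales sums to at most $R_0^{1-4/p}$, which is $\leqslant d^{1-(4-k)/p}$ using $d \geqslant R_0 \geqslant 1$, so it gets absorbed into the final constant. A secondary technical point is the uniform validity of Buser's $(1,1)$-Poincaré inequality on all balls, which is standard for complete Ricci-flat manifolds with a dimension-only constant.
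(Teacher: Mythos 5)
Your argument is correct, but it takes a genuinely different route from the paper's. The paper follows \cite{JT80}*{Corollary~2.7}: it averages the pointwise triangle inequality over a ball $B_R(w)$ centred at the midpoint of the geodesic from $x$ to $y$, writes $f(x)-f(z)$ as a line integral along geodesics emanating from $x$, and uses the Laplacian comparison inequality \cref{ineq:LCT} to convert the resulting average into a convolution-type bound $R^4\int_{B_{3R}(x)}|\nabla f|\,\dist(x,\cdot)^{-3}\,\vol$, which is then estimated by H\"older against the explicit kernel $\dist(x,\cdot)^{-3}$; the volume growth enters only through the $L^q$ norm of that kernel and through $\Vol(B_R(w))$. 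You instead run the standard Morrey--Campanato chaining: Buser's $(1,1)$-Poincar\'e inequality (valid at every scale with a dimensional constant since $\Ric_g=0$), upgraded to $(1,p)$ by Jensen, together with volume doubling gives $|f_{B_{j+1}}-f_{B_j}|\lesssim r_j\,\Vol(B_j)^{-1/p}\,\|\nabla f\|_{L^p}$ along a nested chain shrinking to $x$, and the two volume regimes produce a convergent geometric series whose small-scale tail is absorbed using $d\geqslant R_0\geqslant 1$. The trade-off is clear: the paper's route is self-contained given \Cref{theorem:Local_Bishop--Gromov} and \cref{ineq:LCT}, which are already set up in \Cref{sec:geometry}, whereas yours imports Buser's inequality as a black box but in exchange isolates exactly the structural inputs (doubling, Poincar\'e, and a volume lower bound at the relevant scales) and handles the small-scale/large-scale transition more transparently. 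One caveat applies to both proofs equally: each needs the lower bound $\Vol(B_r(z))\gtrsim r^{4-k}$ for $r\geqslant R_0$ \emph{uniformly in the centre} $z$ (your $B_j$ with $r_j\geqslant R_0$, the paper's $B_R(w)$ with $w$ the midpoint), and in the ALF case this does not follow from comparing with balls centred at $x_0$ alone --- one really has to invoke the asymptotic model metric of \Cref{definition:ALX}; so your ``sandwiching'' remark is no more, but also no less, complete than the paper's bare appeal to \Cref{theorem:Bishop--Gromov}.
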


\begin{proof}
	We follow the proof as given in, for  example, \cite{JT80}*{Corollary~2.7}. Let $f$ be as in the statement, $x, y \in M$ arbitrary, let $w$ be the midpoint of a geodesic connecting $x$ and $y$, and $R \coloneqq \tfrac{1}{4} \dist (x, y)$. Then for any $z \in M$, we have $|f (x) - f (y)| \leqslant |f (x) - f (z)| + |f (z) - f (y)|$. If we integrate both sides of this inequality with respect to $z \in B_R(w)$, then we get
	\begin{equation}\label{eq:First_Holder}
		\Vol(B_R(w)) |f (x)-f (y)| = \int\limits_{B_R(w)} \left( |f (x)-f (z)|+ |f (z)-f (y)| \right) \vol(z),
	\end{equation}
	and using $\gamma_{x,z}$ to denote the arc-length parametrized geodesic connecting $x$ to $z$, so $|\dot{\gamma}_{x,z}(t)|=1$, then
	\begin{align}
		\int\limits_{B_R(w)} |f (x)-f (z)| \vol(z)	&\leqslant \int\limits_{B_R(w)} \int\limits_0^{\dist(x,z)} \left| \partial_t f (\gamma_{x,z}(t)) \right| \: \rd t \: \vol(z) \\
													&\leqslant \int\limits_{B_R(w)} \int\limits_0^{\dist(x,z)} |\nabla f (\gamma_{x,z}(t))| |\dot{\gamma}_{x,z}(t)| \: \rd t \: \vol(z)  \\
													&\leqslant \int\limits_0^{3R} \int\limits_{B_R(w)}  |\nabla f (\gamma_{x,z}(t))| \vol(z) \: \rd t,
	\end{align}
	where we have used the triangle inequality to get that $\dist(x,z) \leqslant 3R$. Now we write
	\begin{equation}
		\vol(z) = m (z) \rd \rho \wedge \vol_{\mathbb{S}^3},
	\end{equation}
	and use \cref{ineq:LCT} to deduce that
	\begin{equation}
		m (z) \leqslant \frac{\dist(x,z)^3}{\dist(x, \gamma_{x,z}(t))^3} \ m ( \gamma_{x,z}(t) ) \lesssim R^3 \frac{m ( \gamma_{x,z}(t) ) }{\dist(x, \gamma_{x,z}(t))^3}.
	\end{equation}
	This, together with the fact that $\gamma_{x,z}(t) \in B_{3R}(x)$ for $z \in B_R(w)$ and all $t \in [0,3R]$ yields
	\begin{equation}
		\int\limits_{B_R(w)} |f (x)-f (z)| \vol(z) \lesssim R^4 \int\limits_{B_{3R}(x)}  \frac{|\nabla f (\tilde{z})|}{\dist(x, \tilde{z} )^3} \ \vol(\tilde{z}) \lesssim R^4 \| \dist(x, \cdot )^{-3} \|_{L^q (B_{3 R (x)}, g)} \| \nabla f \|_{L^p (M, g)}.
	\end{equation}
	where we have used H\"older's inequality with conjugate exponents $p$ and $q$. Now, for $\dist(x, \cdot )^{-3}$ to be in $L_{loc}^q (M, g)$ we must have $p > 4$ in which case, for $R \gg 1$ we find, using \Cref{theorem:Local_Bishop--Gromov}, that
	\begin{align}
		\| \dist(x, \cdot )^{-3} \|_{L^q (B_{3R(x)}, g) } &= \left( \: \int\limits_{B_{\epsilon}(x)} \dist(x, \cdot )^{-\frac{3p}{p-1} } \vol + \int\limits_{B_{3R}(x) - B_{\epsilon}(x)} \dist(x, \cdot )^{-\frac{3 p}{p - 1}} \vol \right)^{\frac{p - 1}{p}} \\ 
		&\lesssim \left( 1 + \int\limits_\epsilon^{3R} \rho^{-\frac{3p}{p-1}} \rho^{3-k} \rd \rho \right)^{\frac{p - 1}{p}} \\ 
		&\lesssim R^{(4 - k)\frac{p - 1}{p} - 3} = R^{1 - k - \frac{4 - k}{p}}.
	\end{align}
	Using a similar trick to control the integral of $ |f (z)-f (y)|$ and inserting into \cref{eq:First_Holder} we find that for $R \gg 1$
	\begin{equation}
		|f (x) - f (y)| \lesssim \tfrac{R^4}{\Vol \left( B_R (w) \right)} R^{1 - k - \frac{4 - k}{p}} \ \|\nabla f\|_{L^p (M, g)} \lesssim R^{1 - \frac{4 - k}{p}} \ \|\nabla f\|_{L^p (M, g)} ,
	\end{equation}
	where we have used \cref{theorem:Bishop--Gromov} to bound the volume of the balls.
\end{proof}

\smallskip

\section{The proofs of \Cref{Mtheorem:KW_on_ALX,Mtheorem:Vanishing_KW}}
\label{sec:proofs}

Let $(\nabla, \Phi)$ be a finite energy solution to the second order \cref{eq:Laplace_Phi,eq:d_star_F} on an ALE of ALF gravitational instanton, $(M, g)$.

First of all, $\tfrac{1}{2} |\Phi|^2$ is subharmonic, because
\begin{equation}
	\Delta \left( \tfrac{1}{2} |\Phi|^2 \right) = \Re \left( \langle \Phi , \nabla^* \nabla \Phi \rangle \right) - |\nabla \Phi|^2 = - \tfrac{1}{4} |[\Phi \wedge \Phi]|^2 - |\nabla \Phi|^2 \leqslant 0.
\end{equation}
Let $G$ be the Green's function of $(M, g)$ from \Cref{lem:Green_on_M}. We then define a nonnegative function
\begin{equation}
	w (x) \coloneqq \int\limits_M G (x, y) \left( |\nabla \Phi (y)|^2 + \tfrac{1}{4} |[\Phi (y) \wedge \Phi (y)]|^2 \right) \vol (y). \label{eq:w_def}
\end{equation}

\smallskip

We now can prove \Cref{Mtheorem:KW_on_ALX}.

\begin{proof}[Proof of \Cref{Mtheorem:KW_on_ALX}]
	The proof below is inspired by \cite{JT80}*{Theorem~10.3}.
	
	Let $r \coloneqq \tfrac{1}{2} \dist (x, x_0)$. Using that $|\nabla \Phi|^2 + \tfrac{1}{4} |[\Phi \wedge \Phi]|^2 \leqslant e_{\KW}$, together with \Cref{lem:Green_on_M} and \Cref{cor:Lpbounds}, we have for all $x \in M$:
	\begin{align}
		0 \leqslant w (x)	&\leqslant \int\limits_M G (x, y) e_{\KW} (y) \vol (y) \\
							&= \left( \: \int\limits_{M - B_r (x)} + \int\limits_{B_r (x)} \: \right) G (x, y) e_{\KW} (y) \vol (y)  \\
							&\lesssim r^{- 1} \| e_{\KW} \|_{L^1 (M, g)} + \| e_{\KW} \|_{L^2 (B_r (x), g)}.
	\end{align}
	Thus $w$ is bounded, and furthermore, $w (x)$ converges uniformly to zero as $\dist (x, x_0) \rightarrow \infty$. The same argument shows that the integrand in \cref{eq:w_def} is absolutely convergent, and thus the smoothness of $w$ also follows. Now, by the construction of $w$, the function $h \coloneqq w + \tfrac{1}{2} |\Phi|^2$ is harmonic. Next, we show that $h = o (\dist (\cdot, x_0))$. For each $R > 0$, define
	\begin{equation}
		m (R) \coloneqq \sup\limits_{x \in B_R (x_0)} |\Phi (x)|^2.
	\end{equation}
	Since $|\Phi|^2$ is subharmonic, the supremum is achieved at some point $\tilde{x}$, with $\dist(\tilde{x}, x_0) = R$, that is $m (R) = |\Phi (\tilde{x})|^2$. Furthermore, by Kato's inequality, $|\rd (|\Phi|^2)| \leqslant 2 |\nabla \Phi| |\Phi| \leqslant 2 e_{\KW}^{1/2} |\Phi|$, and thus, using \Cref{lem:KW_Holder} on $B_R(x_0)$ with $R \gg 1$, we get
	\begin{equation}
		\left| |\Phi (\tilde{x})|^2 - |\Phi (x_0)|^2 \right| \lesssim R^{1 - \frac{4 - k}{p}} \| \Phi e_{\KW}^{1/2} \|_{L^p (B_R (x_0), g)} \lesssim R^{1 - \frac{4 - k}{p}} \| e_{\KW} \|_{L^{p/2} (B_R (x_0), g)}^2 \sqrt{m (R)} \lesssim R^{1 - \frac{4 - k}{p}} \sqrt{m (R)}.
	\end{equation}
	Since $k = 0$, or $1$, we can chose, for example, $p = \frac{3(4-k)}{2} > 4$, and find that
	\begin{equation}
		m (R) \leqslant |\Phi (x_0)|^2 + \left| |\Phi (\tilde{x})|^2 - |\Phi (x_0)|^2 \right| \lesssim |\Phi (x_0)|^2 + R^{1/3} \sqrt{m (R)} \leqslant |\Phi (x_0)|^2 + \tfrac{1}{2} R^{2/3} + \tfrac{1}{2} m (R),
	\end{equation}
	and thus (for $R$ large enough) $m (R) = O \left( R^{2/3} \right)$, which shows that $|\Phi|^2$ grows strictly slower than linearly, and thus $h = w + \tfrac{1}{2} |\Phi|^2$ is harmonic and $o (\dist (\cdot, x_0))$. Therefore, it must be constant by the gradient estimate of Cheng and Yau in \cite{Cheng}*{Section~4}, which is nicely summarized in the form we need in \cite{Li1995}*{Lemma~1.5}. Let this constant be $c$. Clearly, $c \geqslant 0$ and $|\Phi|^2 = 2 c - w$. Since $w$ is nonnegative and converges uniformly to zero at infinity, this proves \cref{eq:Phi_limits}. Finally, since $w$ is nonnegative, if $h = c$ is zero, then so is $\Phi$, which completes the proof of \Cref{Mtheorem:KW_on_ALX}.
\end{proof}

\begin{remark}
	In the cases of the gravitational instantons of type ALG and ALH the proof of \Cref{Mtheorem:KW_on_ALX} given above does not work, because those manifolds do not satisfy the conditions of \cite{LY86}*{Theorem~5.2} and hence do not have positive Green's functions.
\end{remark}

\smallskip

Finally, we prove \Cref{Mtheorem:Vanishing_KW}. This is a vanishing result for finite energy Kapustin--Witten fields on $M = \rl^4$ or $\rl^3 \times \mathbb{S}^1$, equipped with their flat metrics. The proof is a combination of \Cref{Mtheorem:KW_on_ALX} and \cite{T17}*{Theorem~1.2}.

\begin{proof}[Proof of \Cref{Mtheorem:Vanishing_KW}]
	Let $M$ be either $\rl^4$ or $\rl^3 \times \mathbb{S}^1$, and equip it with its standard (flat) metric. Let $(\nabla, \Phi)$ be a finite energy solution to the $\theta$-Kapustin--Witten \cref{eq:thetaKW_1,eq:thetaKW_2,eq:thetaKW_3} (or even just the second order \cref{eq:Laplace_Phi,eq:d_star_F}) with structure group $\rG = \SU (2)$.

	If $M = \rl^3 \times \mathbb{S}^1$, then let us pull back $(\nabla, \Phi)$ to $\rl^4$. In both cases, we get a smooth solution to \cref{eq:Laplace_Phi,eq:d_star_F} on $\rl^4$ with bounded $\Phi$. In particular, $|\Phi|$ has bounded average over spheres, and thus by \cite{T17}*{Theorem~1.2} we get that both $\nabla \Phi$ and $[\Phi \wedge \Phi]$ vanish identically, which yields the claims of \Cref{Mtheorem:Vanishing_KW} immediately.
\end{proof}

\smallskip

\begin{remark}
	The only time $M = \rl^4$ or $\rl^3 \times \mathbb{S}^1$, and $\rG = \SU (2)$ were needed in the proof of \Cref{Mtheorem:Vanishing_KW} is when we used \cite{T17}*{Theorem~1.2}. Thus generalizations of this theorem would immediately provide generalizations of \Cref{Mtheorem:Vanishing_KW}.
\end{remark}

\bigskip

\section*{Conflict of interest}

On behalf of all authors, the corresponding author states that there is no conflict of interest.

\bibliography{references}
\bibliographystyle{abstract}

\end{document}